\numberwithin{equation}{section}
\newcommand{\abs}[1]{\left\vert#1\right\vert}
\newcommand{\bb}[1]{\mathbb{#1}}
\newcommand*{\E}{\mathbb{E}}
\theoremstyle{plain}
\newtheorem{mythm}{Theorem}[section]
\newtheorem{theorem}{Theorem}[section]
\newtheorem{lemma}[mythm]{Lemma}
\newtheorem{proposition}[mythm]{Proposition}
\theoremstyle{definition}
\theoremstyle{remark}
\newtheorem{remark}{Remark}
\begin{document}
\begin{abstract}
Consider an infinite sequence $(U_n)_{n\in\bb{N}}$ of independent Cauchy random variables, defined by a sequence $(\delta_n)_{n\in\bb{N}}$ of location parameters and a sequence $(\gamma_n)_{n\in\bb{N}}$ of scale parameters. Let $(W_n)_{n\in\bb{N}}$ be another infinite sequence of independent Cauchy random variables defined by the same sequence of location parameters and the sequence $(\sigma_n\gamma_n)_{n\in\bb{N}}$ of scale parameters, with $\sigma_n\neq 0$ for all $n\in\bb{N}$. Using a result of Kakutani on equivalence of countably infinite product measures, we show that the laws of $(U_n)_{n\in\bb{N}}$ and $(W_n)_{n\in\bb{N}}$ are equivalent if and only if the sequence $(\abs{\sigma_n}-1)_{n\in\bb{N}}$ is square-summable.
\end{abstract}

\title[Quasi-invariance of countable products of Cauchy measures]{Quasi-invariance of countable products of Cauchy measures under non-unitary dilations}

\author{Han Cheng Lie}
\address{Institut f\"{u}r Mathematik, Freie Universit\"{a}t Berlin, Arnimallee 6, 14195 Berlin, Germany.}
\curraddr{Institut f\"{u}r Mathematik, Freie Universit\"{a}t Berlin, Arnimallee 6, 14195 Berlin, Germany.}
\email{hlie@math.fu-berlin.de}

\author{T.~J.~Sullivan}
\address{Free University of Berlin and Zuse Institute Berlin, Takustrasse 7, 14195 Berlin, Germany}
\curraddr{Free University of Berlin and Zuse Institute Berlin, Takustrasse 7, 14195 Berlin, Germany}
\email{sullivan@zib.de}

\keywords{Cauchy distribution; change of measure; equivalence of measure; random sequence;  Cameron--Martin theorem}

\subjclass{60G30 (primary), 60G20,60E07 (secondary)}

\date{\today}

\maketitle

\section{Introduction}
\label{sec:introduction}

Let $(U_n)_{n \in\bb{N}}$ be a sequence of independent, Cauchy random variables defined on a probability space $(\Omega,\mathcal{F},\bb{P})$, where each random variable $U_n$ has the density
\begin{equation}
	\label{eq:cauchy_density}
	f_n(x,\delta_n,\gamma_n) = \frac{1}{\pi \gamma_n} \frac{\gamma_n^2}{(x-\delta_n)^2+\gamma_n^2}.
\end{equation}
The distribution of $U_n$ is parametrised by the location and scale parameters $\delta_n$ and $\gamma_n$ respectively. We shall assume throughout that $\gamma_n$ is strictly positive for all $n\in\bb{N}$. For every $U_n$, we may define another Cauchy random variable $W_n$ by multiplicatively perturbing the scale parameter $\gamma_n$ by $\sigma_n\neq 0$, so that the pair $(\delta_n)_{n\in\bb{N}}$ and $(\sigma_n \gamma_n)_{n\in\bb{N}}$ determines the law of $(W_n)_{n\in\bb{N}}$. In this note, we prove the following result:
\begin{theorem}\label{thm:equivalence_laws_multiplicative_case}
Let $(U_n)_{n\in\bb{N}}$ be a sequence of independent Cauchy random variables defined by the sequences $(\delta_n)_{n\in\bb{N}}$ and $(\gamma_n)_{n\in\bb{N}}$ of location and scale parameters, and let $(W_n)_{n\in\bb{N}}$ be a sequence of independent Cauchy random variables with the sequences $(\delta_n)_{n\in\bb{N}}$ and $(\sigma_n\gamma_n)_{n\in\bb{N}}$ of location and scale parameters, where $\sigma_n\neq 0$ for all $n\in\bb{N}$. Then the laws of $(U_n)_{n\in\bb{N}}$ and $(W_n)_{n\in\bb{N}}$ are equivalent if and only if  $(\abs{\sigma_n}-1)_{n\in\bb{N}}$ is square-summable.
\end{theorem}
Equivalence of laws under additive perturbations to the sequence of locations or means has been studied for Gaussian $U_n$ \cite[Example 2.7.6]{bogachev_gaussian}, for sequences of random variables with finite Fisher information \cite{shepp}, and for stable Radon probability distributions on locally convex spaces \cite[Theorem 5.2.1]{bogachev_dmmc}. In contrast, Theorem \ref{thm:equivalence_laws_multiplicative_case} on multiplicative perturbations of scale parameters appears to be new. 

\section{Proof of Theorem \ref{thm:equivalence_laws_multiplicative_case}}
\label{sec:multiplicative}

We shall use Kakutani's theorem \cite[Theorem 1]{kakutani_equivalence_product_measures}, which we specialise to our context.
\begin{theorem}
	\label{thm:kakutani_equivalence_product_measures}
	Let $(U_n)_{n\in\bb{N}}$ and $(W_n)_{n\in\bb{N}}$ be given as in Theorem \ref{thm:equivalence_laws_multiplicative_case}. Then the laws of $(U_n)_{n\in\bb{N}}$ and $(W_n)_{n\in\bb{N}}$ are either equivalent or singular. The laws of $(U_n)_{n\in\bb{N}}$ and $(W_n)_{n\in\bb{N}}$ are equivalent if and only if both the following conditions hold: the laws $\bb{P}\circ U_n^{-1}$ and $\bb{P}\circ W_n^{-1}$ of $U_n$ and $W_n$ respectively are equivalent for every $n\in\bb{N}$, and
	\begin{equation}
		\label{eq:kakutani_equivalence_product_measures_suffcon}
		\sum_{n\in\bb{N}}-\log \E \left[\sqrt{\varphi_n(U_n,\abs{\sigma_n})}\right]<\infty
	\end{equation}
	where $\varphi_n(~\cdot~,\abs{\sigma_n})$ is the Radon--Nikodym derivative of $\bb{P}\circ W_n^{-1}$ with respect to $\bb{P}\circ U_n^{-1}$.
\end{theorem}
If $\sigma_n\neq 0$, then the laws of $U_{n}$ and $W_{n}$ are equivalent for every $n\in\bb{N}$, with 
\begin{equation}\label{eq:radon_nikodym_derivative_multiplicative_perturbation_scale}
 \varphi_n(x,\abs{\sigma_n})=\abs{\sigma_n}\frac{(x-\delta_n)^2+\gamma_n^2}{(x-\delta_n)^2+\sigma_n^2\gamma_n^2}.
\end{equation}
Let $y_n\coloneqq (x-\delta_n)/\gamma_n$. From \eqref{eq:cauchy_density} and \eqref{eq:radon_nikodym_derivative_multiplicative_perturbation_scale} it follows that
\begin{equation}\label{eq:integral_form_EVal_sqrt_RNdensity_mult_case}
 \bb{E}[\sqrt{\varphi_n(U_n,\abs{\sigma_n})}]=\frac{\sqrt{\abs{\sigma_n}}}{\pi}\int_{\bb{R}}\frac{1}{\sqrt{y_n^2+\sigma_n^2}\sqrt{y_n^2+1}}~\mathrm{d}y_n=\frac{1}{\pi}\int_{\bb{R}}\frac{1}{\sqrt{\tfrac{y_n^2}{\abs{\sigma_n}}+\abs{\sigma_n}}\sqrt{y_n^2+1}}~\mathrm{d}y_n.
\end{equation}
We use \eqref{eq:integral_form_EVal_sqrt_RNdensity_mult_case} in the following lemma.
\begin{lemma}\label{lem:necessary_condition_of_convergence_of_kakutanis_series}
 For all $\abs{\sigma_n}>0$, $-\log\bb{E}[\sqrt{\varphi_n(U_n,\abs{\sigma_n})}]$ is nonnegative, and is strictly positive if and only if $\abs{\sigma_n}\neq 1$. Furthermore, if \eqref{eq:kakutani_equivalence_product_measures_suffcon} holds, then $\lim_{n\to\infty}\abs{\sigma_{n}}=1$. 
\end{lemma}
\begin{proof}
Using Jensen's inequality with the concave map $x\mapsto \sqrt{x}$, and using that $\varphi_n$ is the Radon--Nikodym derivative of $\bb{P}\circ W_n^{-1}$ with respect to $\bb{P}\circ U_n^{-1}$, we obtain
\begin{equation*}
 1=\sqrt{\bb{E}[\varphi_n(U_n,\abs{\sigma_{n}})]}\geq \bb{E}[\sqrt{\varphi_n(U_n,\abs{\sigma_{n}})}],
\end{equation*}
and hence $ -\log \bb{E}[\sqrt{\varphi_n(U_n,\abs{\sigma_{n}})}]$ is nonnegative for all $\abs{\sigma_n}>0$. Since $x\mapsto \sqrt{x}$ is not affine, equality holds above (and hence $-\log\bb{E}[\sqrt{\varphi_n(U_n,\abs{\sigma_n})}]=0$) if and only if $\varphi_n(U_n,\abs{\sigma_n})$ is $\bb{P}$-almost surely constant. By \eqref{eq:integral_form_EVal_sqrt_RNdensity_mult_case}, $\varphi_n(U_n,\abs{\sigma_n})$ is $\bb{P}$-almost surely constant if and only if $\abs{\sigma_n}=1$. This proves the first statement. 

To prove the second statement, observe that  \eqref{eq:integral_form_EVal_sqrt_RNdensity_mult_case} implies continuity of the map $\abs{\sigma_n}\mapsto \bb{E}[\sqrt{\varphi_n(U_n,\abs{\sigma_n})}]$ on $(0,\infty)$. The dominated convergence theorem and the second equality in \eqref{eq:integral_form_EVal_sqrt_RNdensity_mult_case} imply that, if $(\abs{\sigma_n})_{n\in\bb{N}}$ decreases to zero or increases to infinity, then $\lim_{n\to\infty}\bb{E}[\sqrt{\varphi_n(U_n,\abs{\sigma_n})}]=0$. Summarising, we have that the map $\abs{\sigma_n}\mapsto \bb{E}[\sqrt{\varphi_n(U_n,\abs{\sigma_n})}]$ from $(0,\infty)$ to $[0,1]$ is continuous, attains the value 1 if and only if its argument is 1, and does not increase asymptotically to 1 at either end of its domain. It follows that $\lim_{n\to\infty}\bb{E}[\sqrt{\varphi_n(U_n,\abs{\sigma_n})}]=1$ if and only if $\lim_{n\to\infty}\abs{\sigma_n}=1$. Since \eqref{eq:kakutani_equivalence_product_measures_suffcon} implies that $\lim_{n\to\infty}\bb{E}[\sqrt{\varphi_n(U_n,\abs{\sigma_n})}]=1$, the proof is complete.
\end{proof}

\begin{remark}\label{rem:multiplicative_perturbations_nonzero_etc}
A consequence of Lemma \ref{lem:necessary_condition_of_convergence_of_kakutanis_series} that we shall use repeatedly is the following: if \eqref{eq:kakutani_equivalence_product_measures_suffcon} holds, and if  $\abs{\sigma_n}\neq 1$ for all $n\in\bb{N}$, then, for any $0<\alpha<1$, there exists some $N(\alpha)\in\bb{N}$ such that 
 \begin{equation}\label{eq:multiplicative_perturbations_nonzero_etc}
  0<\abs{\abs{\sigma_n}-1}<\alpha,\quad\forall n\geq N(\alpha).
 \end{equation}
 To simplify the subsequent presentation, we shall assume that $\alpha$ is sufficiently small for our purposes, and that $N(\alpha)=1$. If \eqref{eq:kakutani_equivalence_product_measures_suffcon} holds, then by Lemma \ref{lem:necessary_condition_of_convergence_of_kakutanis_series} we may assume this without any loss of generality, since we may discard finitely many terms in the series \eqref{eq:kakutani_equivalence_product_measures_suffcon} without losing convergence. 
\end{remark}

Given \eqref{eq:integral_form_EVal_sqrt_RNdensity_mult_case}, we may define for $0<\alpha<1$, a function $I:(1-\alpha,1+\alpha)\to (0,\infty)$ by
\begin{equation}\label{eq:I_of_abs_sigma}
 I(\abs{\sigma})\coloneqq \frac{1}{\pi}\int_{\bb{R}}\frac{1}{\sqrt{y^2+\sigma^2}}\frac{1}{\sqrt{y^2+1}}\ \mathrm{d}y.
\end{equation}
Therefore, $I(\abs{\sigma})=1$ if and only if $\abs{\sigma}=1$. Furthermore, for some $0<\alpha<1$ (see Remark \ref{rem:multiplicative_perturbations_nonzero_etc}), it follows from \eqref{eq:multiplicative_perturbations_nonzero_etc} that the integrand on the right-hand side of \eqref{eq:I_of_abs_sigma} is dominated by an integrable function that depends on $\alpha$ but not on $\sigma$. Thus, by the dominated convergence theorem, we may interchange integration and differentiation to compute the derivative of $I$ with respect to $\abs{\sigma}$. By computing the higher-order derivatives of $(y^2+\sigma^2)^{-1/2}$ with respect to $\abs{\sigma}$ and employing the same argument, the assertion holds for higher-order derivatives of $I$ as well. The Taylor expansion of $I(\abs{\sigma})$ about $\abs{\sigma}=1$ up to second order is
\begin{subequations}
 \begin{align}
  I(\abs{\sigma})&=1+a_1 (\abs{\sigma}-1)+a_2(\abs{\sigma}-1)^2+O((\abs{\sigma}-1)^3),
  \label{eq:series_representation_of_I_n_pre}
  \\
  a_m&=\frac{1}{m !}\frac{1}{\pi}\int_{\bb{R}}\frac{1}{\sqrt{y^2+1}}
  \left. \left(\frac{\partial}{\partial \abs{\sigma}}\right)^m\frac{1}{\sqrt{y^2+\abs{\sigma}^2}}  \right\vert_{\abs{\sigma}=1}
  \mathrm{d}y.
  \label{eq:a_ell_as_ellth_derivative_of_I_n}
 \end{align}
\end{subequations}
Note that the $a_m$ are independent of $\abs{\sigma}$. 

We shall use the following lemma to calculate the values of $a_m$ for $m=1,2$.
\begin{lemma}\label{lem:gamma_function_stuff}
 Let $r,s\in\bb{N}_0$ with $r\le s-1$. Then
 \begin{equation*}
  \int_{\bb{R}}\frac{x^{2r}}{(x^2+1)^s}\mathrm{d}x=\pi\frac{(2r)!(2(s-r-1))!}{4^{s-1} r!(s-r-1)!(s-1)!}
 \end{equation*}
\end{lemma}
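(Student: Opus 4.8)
The plan is to evaluate the integral $I(r,s) \defeq \int_{\mathbb{R}} \frac{x^{2r}}{(x^2+1)^s}\,dx$ by reducing it to a Beta function. First I would exploit evenness to write $I(r,s) = 2\int_0^\infty \frac{x^{2r}}{(x^2+1)^s}\,dx$, and then substitute $t = x^2$ (so $x = t^{1/2}$, $dx = \tfrac12 t^{-1/2}\,dt$), which turns the integral into $\int_0^\infty \frac{t^{r-1/2}}{(1+t)^s}\,dt$. This is precisely the standard integral representation of the Beta function: $\int_0^\infty \frac{t^{p-1}}{(1+t)^{p+q}}\,dt = B(p,q) = \frac{\Gamma(p)\Gamma(q)}{\Gamma(p+q)}$, valid for $p,q>0$. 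Matching exponents gives $p = r + \tfrac12$ and $p+q = s$, hence $q = s - r - \tfrac12$; the hypothesis $r \le s-1$ (equivalently $r < s$, since $r,s$ are integers) guarantees $q > 0$, so the Beta integral converges and the representation applies. Therefore $I(r,s) = \frac{\Gamma(r+\tfrac12)\,\Gamma(s-r-\tfrac12)}{\Gamma(s)}$.

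It then remains to convert the half-integer Gamma values into the factorial expression stated in the lemma. Here I would use the classical identities $\Gamma(s) = (s-1)!$ and, for any $m \in \mathbb{N}_0$, $\Gamma\!\left(m+\tfrac12\right) = \frac{(2m)!}{4^m\, m!}\sqrt{\pi}$. Applying the latter with $m = r$ and with $m = s-r-1$ (the latter being a nonnegative integer precisely because $r \le s-1$) yields
\begin{equation*}
 I(r,s) = \frac{1}{(s-1)!}\cdot\frac{(2r)!}{4^r\, r!}\sqrt{\pi}\cdot\frac{(2(s-r-1))!}{4^{s-r-1}\,(s-r-1)!}\sqrt{\pi} = \pi\,\frac{(2r)!\,(2(s-r-1))!}{4^{s-1}\, r!\,(s-r-1)!\,(s-1)!},
\end{equation*}
which is exactly the claimed formula, since $4^r \cdot 4^{s-r-1} = 4^{s-1}$ and $\sqrt{\pi}\cdot\sqrt{\pi} = \pi$.

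The argument is essentially routine, so there is no serious obstacle; the only points requiring care are verifying the convergence condition for the Beta integral (handled by $r \le s-1$) and correctly tracking the powers of $4$ and the factor of $\pi$ through the substitution. As an alternative that avoids invoking the Beta--Gamma machinery, one could instead prove the identity by induction on $s$ for fixed $r$: the base case $s = r+1$ reduces to $\int_{\mathbb{R}} \frac{x^{2r}}{(x^2+1)^{r+1}}\,dx$, and an integration-by-parts or partial-fractions reduction relates $I(r,s)$ to $I(r,s-1)$ and $I(r-1,s-1)$, after which one checks that the closed-form expression satisfies the same recursion. I would present the Beta-function proof as the primary one since it is shortest, and the value $\Gamma(m+\tfrac12) = \frac{(2m)!}{4^m m!}\sqrt\pi$ is itself a one-line induction from $\Gamma(\tfrac12)=\sqrt\pi$ and $\Gamma(z+1)=z\Gamma(z)$ if a self-contained treatment is desired.
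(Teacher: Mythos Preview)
Your proof is correct and essentially identical to the paper's: both substitute $t=x^2$ to reduce the integral to $\int_0^\infty t^{r-1/2}(1+t)^{-s}\,dt$, identify this as $\Gamma(r+\tfrac12)\Gamma(s-r-\tfrac12)/\Gamma(s)$ via the Beta-integral representation, and then apply $\Gamma(m+\tfrac12)=\frac{(2m)!}{4^m m!}\sqrt{\pi}$ and $\Gamma(s)=(s-1)!$ to obtain the stated factorial form. The only cosmetic difference is that the paper cites the Beta-type identity from Tricomi--Erd\'elyi rather than naming the Beta function explicitly.
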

\begin{proof}
 Letting $y\coloneqq x^2$, we have $\mathrm{d}x=2y^{-1/2}\mathrm{d}y$, so
 \begin{equation*}
  \int_{\bb{R}}\frac{x^{2r}}{(x^2+1)^s}\mathrm{d}x=\int_0^\infty\frac{y^{r-1/2}}{(y+1)^s}\mathrm{d}y=\int_0^\infty\frac{y^{(r+1/2)-1}}{(y+1)^s}\mathrm{d}y.
 \end{equation*}
Using \cite[Equation (2)]{tricomi_erdelyi} and properties of the gamma function $\Gamma(t)=\int_0^\infty x^{t-1}e^{-x}\mathrm{d}x$ (for $\text{Re } t>0$), we have, for $0<\text{Re } (r+1/2)< \text{Re } s$,
\begin{equation*}
 \int_0^\infty\frac{y^{(r+1/2)-1}}{(y+1)^s}\mathrm{d}y=\frac{\Gamma\left(r+\frac{1}{2}\right)\Gamma\left(s-r-\frac{1}{2}\right)}{\Gamma(s)}=\frac{(2r)!}{4^r r!}\sqrt{\pi}\frac{(2(s-r-1))!}{4^{s-r-1}(s-r-1)!}\sqrt{\pi}\frac{1}{(s-1)!}.
\end{equation*}
Simplifying the right-hand side yields the desired conclusion.
\end{proof}
Thus we have
 \begin{subequations}\label{eq:a}
\begin{align}
 a_1&=\frac{1}{\pi}\int_{\bb{R}}\frac{1}{\sqrt{y_n^2+1}}\frac{(-1)}{(y_n^2+1)^{3/2}}\mathrm{d}y_n=-\frac{1}{\pi}\left(\pi\frac{0!\ 2!}{4\cdot 1!\cdot 1!}\right)=-\frac{1}{2},
 \label{eq:a_1}
 \\
 a_2&=\frac{1}{2\pi}\int_{\bb{R}}\frac{1}{\sqrt{y_n^2+1}}\frac{2-y_n^2}{(y_n^2+1)^{5/2}}\mathrm{d}y_n=\frac{1}{2\pi}\left(\frac{9\pi}{8}-\frac{\pi}{2}\right)=\frac{5}{16}.
 \label{eq:a_2}
\end{align}  
\end{subequations}

Given the function $I$, a sequence $(\sigma_n)_{n\in\bb{N}}$ satisfying \eqref{eq:multiplicative_perturbations_nonzero_etc}, and \eqref{eq:a_2}, define 
\begin{equation}\label{eq:epsilon_n_mult_case}
 \epsilon_n\coloneqq \frac{1}{\abs{\sigma_n}-1}\left(I(\abs{\sigma_n})-1-a_1(\abs{\sigma_n}-1)\right),\quad\forall n\in\bb{N}.
\end{equation}
\begin{lemma}\label{lem:lower_upper_bounds_for_abs_epsilon_n}
Let $\epsilon_{n}$ be as in \eqref{eq:epsilon_n_mult_case}. If \eqref{eq:kakutani_equivalence_product_measures_suffcon} holds, then for some $0<\alpha<1$ and some $C>0$ that do not depend on $n$, 
 \begin{equation}\label{eq:epsilon_n_bound}
    c\abs{\abs{\sigma_n}-1}\leq \abs{\epsilon_n}\leq C\abs{\abs{\sigma_n}-1},\quad\forall n\in\bb{N}.  
 \end{equation}
\end{lemma}
\begin{proof}
First, note that by Remark \ref{rem:multiplicative_perturbations_nonzero_etc}, we may bound the constant in the $O(\abs{\abs{\sigma_n}-1}^3)$ term in \eqref{eq:I_of_abs_sigma} by some $C>0$ that may depend on $\alpha$, but not on $\abs{\sigma}$, i.e.
\begin{equation*}
 I(\abs{\sigma})=1+\sum^2_{m=1}a_m (\abs{\sigma}-1)^m+C(\abs{\sigma}-1)^3,\quad\forall \abs{\sigma}\in (1-\alpha,1+\alpha).
\end{equation*}
Thus by \eqref{eq:epsilon_n_mult_case}, the triangle inequality, and \eqref{eq:multiplicative_perturbations_nonzero_etc}, we obtain 
\begin{align*}
 \abs{\epsilon_n}&=\abs{a_2(\abs{\sigma_n}-1)+C(\abs{\sigma_n}-1)^2}\leq \max\{C,a_2\} \abs{\abs{\sigma_n}-1}\left(1+\abs{\abs{\sigma_n}-1}\right),
\end{align*}
which yields the upper bound in \eqref{eq:epsilon_n_bound} since $\abs{\abs{\sigma_n}-1}<\alpha$ by \eqref{eq:multiplicative_perturbations_nonzero_etc}. For the lower bound, we have for the same $C>0$ as above that
\begin{equation*}
  \abs{\epsilon_n}\geq \abs{a_2(\abs{\sigma_n}-1)}-\abs{C(\abs{\sigma_n}-1)^2}= \abs{\abs{\sigma_n}-1}\left(\abs{a_2}-C\abs{\abs{\sigma_n}-1}\right).
\end{equation*}
and for sufficiently small $\alpha$, we can make $\abs{a_2}-C\abs{\abs{\sigma_n}-1}$ strictly positive, by \eqref{eq:multiplicative_perturbations_nonzero_etc}.
\end{proof}
Again by making $\alpha$ small enough, we have from \eqref{eq:epsilon_n_mult_case} and \eqref{eq:a_1} that
\begin{equation}\label{eq:series_representation_log_I_n}
 \log I(\abs{\sigma_n})=\sum_{m\in\bb{N}}\frac{(-1)^{m-1}}{m}\left(I(\abs{\sigma_n})-1\right)^m=\sum_{m\in\bb{N}}\frac{(-1)^{m-1}}{m}\left(\epsilon_n-\frac{1}{2}\right)^m(\abs{\sigma_n}-1)^m.
\end{equation}

\begin{proposition}\label{prop:convergence_of_kakutanis_series_mult_case}
If the laws of $(U_n)_{n\in\bb{N}}$ and $(W_n)_{n\in\bb{N}}$ are equivalent, then $(\abs{\sigma_n}-1)_{n\in\bb{N}}$ is square-summable.
\end{proposition}
\begin{proof}
If the laws of $(U_n)_{n\in\bb{N}}$ and $(W_n)_{n\in\bb{N}}$ are equivalent, then by Theorem \ref{thm:kakutani_equivalence_product_measures}, \eqref{eq:kakutani_equivalence_product_measures_suffcon} holds. Without loss of generality, we may assume that $\abs{\sigma_n}\neq 1$ for all $n\in\bb{N}$, since otherwise the corresponding summand vanishes, by Lemma \ref{lem:necessary_condition_of_convergence_of_kakutanis_series}. Furthermore, (see Remark \ref{rem:multiplicative_perturbations_nonzero_etc}), we may also assume that \eqref{eq:multiplicative_perturbations_nonzero_etc} holds for some $0<\alpha<1$ that we can set as small as needed for \eqref{eq:series_representation_log_I_n} to hold and such that we can rewrite $\log\abs{\sigma_n}$ as a Mercator series in $\abs{\sigma_n}-1$, for all $n\in\bb{N}$. Using these conditions, \eqref{eq:integral_form_EVal_sqrt_RNdensity_mult_case}, \eqref{eq:I_of_abs_sigma}, and \eqref{eq:series_representation_log_I_n}, we obtain
\begin{equation}
 -\log \bb{E}[\sqrt{\varphi_n(U_n,\abs{\sigma_n})}]=-\sum_{m\in\bb{N}}\frac{(-1)^{m-1}}{m}\left[\frac{1}{2}+\left(\epsilon_n-\frac{1}{2}\right)^m\right](\abs{\sigma_n}-1)^m.
 \label{eq:taylor_series_of_kakutani_summands}
\end{equation}
The summand for $m=1$ is, by \eqref{eq:epsilon_n_mult_case} and \eqref{eq:I_of_abs_sigma},
\begin{equation*}
 -\epsilon_n(\abs{\sigma_n}-1)=(a_1(\abs{\sigma_n}-1)+1-I(\abs{\sigma_n}))=-a_2(\abs{\sigma_n}-1)^2+C(\abs{\sigma_n}-1)^3,
\end{equation*}
where $C>0$ may depend on $\alpha$ but not on $\abs{\sigma_n}$; see the proof of Lemma \ref{lem:lower_upper_bounds_for_abs_epsilon_n}.
Expanding the coefficient of $(\abs{\sigma_n}-1)^2$ for the summand in \eqref{eq:taylor_series_of_kakutani_summands} corresponding to $m=2$, we obtain that its value is
\begin{equation*}
 \frac{1}{2}\left[\frac{1}{2}+\left(\epsilon_n^2-\epsilon_n+\frac{1}{4}\right)\right]=\frac{3}{8}+\frac{1}{2}\left(\epsilon_n^2-\epsilon_n\right).
\end{equation*}
By Lemma \ref{lem:lower_upper_bounds_for_abs_epsilon_n}, $\epsilon_n^2-\epsilon_n$ converges to zero linearly in $\abs{\sigma_n}-1$. Therefore, combining this observation with the preceding two equations, we obtain from \eqref{eq:taylor_series_of_kakutani_summands} and \eqref{eq:a_2} that, for some $C'$ independent of $\abs{\sigma_n}$,
\begin{align*}
  -\log \bb{E}[\sqrt{\varphi_n(U_n,\abs{\sigma_n})}]&=\left(-\frac{5}{16}+\frac{3}{8}\right)(\abs{\sigma_n}-1)^2+C'((\abs{\sigma_n}-1)^3)
  \\
  &=\frac{1}{16}(\abs{\sigma_n}-1)^2+C'((\abs{\sigma_n}-1)^3).
\end{align*}
Therefore, by making $\alpha$ sufficiently small, we may ensure that for any $0<\delta<1/16$,
\begin{equation*}
 \frac{1}{(\abs{\sigma_n}-1)^2}\abs{-\log \bb{E}[\sqrt{\varphi_n(U_n,\abs{\sigma_n})}]-\frac{1}{16}(\abs{\sigma_n}-1)^2}=\abs{C'}\abs{\abs{\sigma_n}-1}<\delta.
\label{eq:abs_J_n_divby_tau_n_sq_lt_epsilon_prime}
\end{equation*}
Thus, for any $0<\delta<1/16$,
\begin{align*}
 0<\left(\frac{1}{16}-\delta\right)(\abs{\sigma_n}-1)^2 &<\frac{1}{16}(\abs{\sigma_n}-1)^2-\abs{-\log \bb{E}[\sqrt{\varphi_n(U_n,\abs{\sigma_n})}]-\frac{1}{16}(\abs{\sigma_n}-1)^2}.
\end{align*}
Since $-\abs{x}<x$ if $x>0$, and $-\abs{x}=x$ if $x<0$, it follows that the right-hand side of the inequality above is less than or equal to $-\log \bb{E}[\sqrt{\varphi_n(U_n,\abs{\sigma_n})}]$. Summing over $n$ yields
\begin{equation*}
 0<\left(\frac{1}{16}-\delta\right)\sum_{n\in\bb{N}}(\abs{\sigma_n}-1)^2<\sum_{n\in\bb{N}}-\log \bb{E}[\sqrt{\varphi_n(U_n,\abs{\sigma_n})}]<\infty,
\end{equation*}
and therefore the summability of Kakutani's series implies that $(\abs{\sigma_n}-1)_{n\in\bb{N}}$ is square-summable.
\end{proof}
\begin{proposition}\label{prop:tau_in_ell2_suffices_for_equivalence_mult_case}
If $(\abs{\sigma_n}-1)_{n\in\bb{N}}$ is square-summable, then the laws of $(U_n)_{n\in\bb{N}}$ and $(W_n)_{n\in\bb{N}}$ are equivalent.
\end{proposition}
\begin{proof}
 Without loss of generality, we may assume that the $\ell^2$ norm $\Vert (\abs{\sigma_n}-1)_{n\in\bb{N}}\Vert_2$ of  $(\abs{\sigma_n}-1)_{n\in\bb{N}}$ is strictly less than 1. Then, for $1<  p\leq q\leq +\infty$, the $\ell^p$-norm of $(\abs{\sigma_n}-1)_{n\in\bb{N}}$ is larger than or equal to the $\ell^q$-norm of $(\abs{\sigma_n}-1)_{n\in\bb{N}}$. Using this fact, the sum formula for a geometric progression, and the hypothesis on the $\ell^2$ norm of $(\abs{\sigma_n}-1)_{n\in\bb{N}}$, we have
\begin{equation}\label{eq:suffcon_for_interchange_summation_in_proof_of_sufficiency_of_ell_2_for_equivalence}
 \sum_{m\geq 2}\sum_{n\in\bb{N}}\abs{\abs{\sigma_n}-1}^m\leq \sum_{m\geq 2}\left(\sum_{n\in\bb{N}}\abs{\abs{\sigma_n}-1}^2\right)^{m/2} =\frac{\Vert(\abs{\sigma_n}-1)_{n\in\bb{N}}\Vert^2_2}{1-\Vert (\abs{\sigma_n}-1)_{n\in\bb{N}}\Vert_2^{1/2}}<\infty.
\end{equation}
Since $(\abs{\sigma_n}-1)_{n\in\bb{N}}$ converges to zero, we may choose $\alpha$ in \eqref{eq:multiplicative_perturbations_nonzero_etc} so small that, for $\epsilon_n$ defined in \eqref{eq:multiplicative_perturbations_nonzero_etc}, it holds that $\abs{\epsilon_n}<1/8$ for all $n\in\bb{N}$, by Lemma \ref{lem:lower_upper_bounds_for_abs_epsilon_n}. Since
\begin{equation*}\label{eq:abs_of_half_plus_mth_power_epsilon_minus_half_lt_1}
 \abs{\frac{1}{2}+\left(\epsilon_n-\frac{1}{2}\right)^m}
\le
 \begin{cases}
  \abs{\epsilon_n} & m=1
  \\
  \frac{1}{2}+\left(\abs{\epsilon_n}+\frac{1}{2}\right)^m & m\ge 2,
 \end{cases}
\end{equation*}
the condition that $\abs{\epsilon_n}<1/8$ implies that we may bound the left-hand side by 1 for all $m\in\bb{N}$. Using this observation and the triangle inequality, we may bound the right-hand side of  \eqref{eq:taylor_series_of_kakutani_summands} according to
\begin{equation*}
 -\sum_{m\in\bb{N}}\frac{(-1)^{m-1}}{m}\left[\frac{1}{2}+\left(\epsilon_n-\frac{1}{2}\right)^m\right](\abs{\sigma_n}-1)^m\leq \sum_{m\in\bb{N}}\abs{\abs{\sigma_n}-1}^m,
\end{equation*}
and hence from \eqref{eq:taylor_series_of_kakutani_summands} it follows that $-\log\bb{E}[\sqrt{\varphi_n(U_n,\abs{\sigma_n})}]\leq \Sigma_{m\in\bb{N}}\abs{\abs{\sigma_n}-1}^m$. Summing this inequality over $n$ yields
\begin{equation*}
  \sum_{n\in\bb{N}}-\log \bb{E}[\sqrt{\varphi_n(U_n,\abs{\sigma_n})}]\le  \sum_{n\in\bb{N}}\sum_{m\geq 2}\abs{\abs{\sigma_n}-1}^m,
\end{equation*}
where the right-hand side is finite, by changing the order of summation in \eqref{eq:suffcon_for_interchange_summation_in_proof_of_sufficiency_of_ell_2_for_equivalence}. Therefore, \eqref{eq:kakutani_equivalence_product_measures_suffcon} holds, and by Theorem \ref{thm:kakutani_equivalence_product_measures}, the laws of $(U_n)_{n\in\bb{N}}$ and $(W_n)_{n\in\bb{N}}$ are equivalent.
\end{proof}

\begin{remark}
 An anonymous referee pointed out that one can express the function $I$ defined in \eqref{eq:I_of_abs_sigma} in terms of complete elliptic integrals (see, e.g., \cite[Section~3.152, Formula~1]{gradshteyn_ryzhik} and \cite[Section~8.112, Formula 1]{gradshteyn_ryzhik}), and that the series representations of these elliptic integrals in \cite[Section~8.113]{gradshteyn_ryzhik} may provide an alternative method for obtaining the results that we presented above. 
\end{remark}

\section*{Acknowledgments}
The authors are supported by the Freie Universit\"{a}t Berlin within the Excellence Initiative of the German Research Foundation. The authors thank the editors and referees who read the article for their helpful comments.

\bibliographystyle{amsplain}
\bibliography{ecp_cm}

\end{document}